\newtheorem{theorem}{Theorem}[section]
\newtheorem{lem}[theorem]{Lemma}
\newtheorem{proposition}[theorem]{Proposition}
\newtheorem{Assumption}[theorem]{Assumption}
\newcommand{\R}{\mathbb{R}}
\newcommand{\C}{\mathbb{C}}
\newcommand{\N}{\mathbb{N}}
\newcommand{\cA}{\mathcal A}
\newcommand{\cP}{\mathcal P}
\newcommand{\pf}{\mathfrak{p}}
\newcommand{\qf}{\mathfrak{q}}
\newcommand{\af}{\mathfrak{a}}
\newcommand{\norm}[1]{\left\Vert#1\right\Vert}
\newcommand{\abs}[1]{\left\vert#1\right\vert}
\title{Determining the potential and the gradient coupling of two-state quantum systems in an infinite
waveguide}
\begin{document}

\maketitle

\begin{center}
Mohamed Hamrouni, Imèn Rassas, Éric Soccorsi
\end{center}
\bigskip
\bigskip
\bigskip
\bigskip

\textbf{Abstract.} 
We consider the inverse coefficient problem of simultaneously determining the space dependent electric potential, the zero-th order coupling term and the first order coupling vector of a two-state Schr\"odinger equation in an infinite cylindrical domain of $\R^n$, $n \ge 2$, from finitely many partial boundary measurements of the solution. We prove that these $n+1$ unknown scalar coefficients can be H\"older stably retrieved by $(n+1)$-times suitably changing the initial condition attached at the system.

\medskip

\textbf{Keywords:} Inverse problem, stability estimate, two-state Schr\"odinger equation. \\

\medskip

\textbf{Mathematics subject classification 2010:} 35R30.

\section{Introduction}
This is the second of two papers dealing with the stability issue in the inverse problem of determining the electric potential and the coupling coefficients of a two-state quantum system, from local Neumann data. Such systems are commonly used to describe the dynamics of elementary particules like electrons, carrying a two-state (or two-level) quantum mechanical label called spin, and they can exist in any superposition of two independent states. Their dynamics are governed by Schr\"odinger equations bound together through a linear gradient coupling. When the two quantum states are constrained to a bounded spatial domain, it was proved in \cite{KRS} that the electric potential and the coupling are stably determined by finitely many partial boundary observations of the system. In the present work we aim for the same type of identification result when the quantum motion is no longer bounded but may escape to infinity in one direction over the course of time.

\subsection{Settings}

Throughout this article,
$\omega$ is a bounded domain of $\mathbb{R}^{n-1}$, $n \ge 2$, with smooth boundary $\gamma:=\partial\omega$, and $\Omega:= \omega \times \R$.
For $T\in \R_+$, we consider the following initial-boundary value problem (IBVP) with initial states $u_0^\pm$ and non-homogenous Dirichlet boundary conditions $g^\pm$, for the coupled Schr\"odinger equations in the unknowns $u^\pm$,
\begin{equation}
\label{sy1}
 \left\{
\begin{array}{ll}
-i\partial_t u^+ -\Delta u^++q^+u^+ +A\cdot\nabla u^-+pu^-=0&\textrm{in}\ Q:=\Omega \times (0,T) \\
-i\partial_t u^- -\Delta u^-+q^-u^- -A\cdot\nabla u^++pu^+=0&\textrm{in}\ Q \\
u^+(\cdot,0)=u^+_0,\ u^-(\cdot,0)=u^-_0&\textrm{in}\ \Omega\\
u^+=g^+,\ u^-=g^-&\textrm{on}\ \Sigma:= \Gamma \times (0,T),
\end{array}
\right.
\end{equation}
where $\Gamma := \gamma \times \R$. Since $\Gamma$ is unbounded, let us make the above boundary condition more precise. For all $x \in \Omega$, we write $x=(x^\prime,x_n)$ where $x^\prime =(x_1,\ldots,x_{n-1}) \in \omega$ and $x_n \in \R$, and using a standard density argument we extend the mapping
$$ \begin{array}{ccl} C_0^\infty(\R \times (0,T), H^2(\omega)) & \to & L^2(\R \times (0,T), H^{\frac{3}{2}}(\omega)) \\ w & \mapsto & \left[ (x_n,t) \in \R \times (0,T) \mapsto w(\cdot,x_n,t)_{| \gamma} \right], \end{array}
$$
to a bounded operator $\gamma_0$ acting from $L^2(\R \times (0,T),H^2(\omega))$ into 
$L^2(\R \times (0,T),H^{\frac{3}{2}}(\gamma))$. Then, for all $u^\pm \in L^2(0,T,H^2(\Omega))$, the boundary condition in \eqref{sy1} reads $\gamma_0 u^\pm = g^\pm$.

In the present paper we aim to stably retrieve the electric potentials $q^\pm : \Omega \to \R$, the zero-th order coupling term $p : \Omega \to \R$ and the first order coupling vector $A : \Omega \to \R^n$, by finitely many partial boundary measurements over 
the entire time-span $(0,T)$ of the solution $u^\pm$ to \eqref{sy1}. 
In contrast with \cite{KRS} where the spatial domain $\Omega$ is bounded, here we consider an infinitely extended cylindrical domain and we address the problem of simultaneous identification of non-compactly supported unknown coefficients $p$, $q^\pm$ and $A$. This requires a slightly different and technically more demanding approach than the one implemented in \cite{KRS}.  

\subsection{Motivations}
The dynamics of the two states $u^\pm$ governed by \eqref{sy1} are bound together through linear gradient coupling $p u^\mp  \pm A \cdot \nabla u^\mp$. We refer the reader to \cite{LSY} and the references therein for the relevance of these processes in physics. Gradient coupling 
appears also naturally in quantum fields theory (see \cite{BLMM,R}) or quantum cosmology (see \cite{BTW,DKT}), and it is sometimes a first-order approximation of nonlinear coupling (see \cite{Y}).

Quantum wires such as carbon nanotubes are extremely narrow structures which have a length-to-diameter ratio up to $10^8$. They are commonly modeled by infinite three-dimensional cylindrical domains such as $\Omega$, in which electrons are essentially free to move in one direction. Quantum wires exhibit valuable physical properties for electronics, optics and other fields of materials science and technology, see e.g., \cite{AKBRSR}, and for this reason they have attracted a lot of attention from the scientific community.  

The IBVP \eqref{sy1} can be interpreted as the time-evolution of 
the spin of a spin-$\frac{1}{2}$ particle such as an electron (whose spin can have values $\pm\frac{\hbar}{2}$, where $\hbar$ is the reduced Planck constant) confined in a carbon nanotube. Notice that for the sake of notational simplicity, the various physical constants such as $\hbar$, the charge and the mass of the electron, are all taken equal to one in \eqref{sy1}.

\subsection{Bibliography}
The mathematical literature devoted to inverse coefficient problems for the dynamic Schr\"odinger equation is so extensive that this presentation is not intended to be exhaustive, but we can mention \cite{B,BC, BKS2, CKS, KS} where zero-th or/and first order unknown coefficients of the Schr\"odinger equation are determined by the Dirichlet-to-Neumann map. These articles assume knowledge of infinitely many boundary data, but in \cite{BP,YY} the real-valued electric potential is stably retrieved by one partial lateral observation of the solution. This result was 
extended to complex-valued electric potentials in \cite{HKSY}. The boundary measurement in \cite{BP,HKSY,YY} is taken on a subpart of the boundary fulfilling a geometric condition related to geometric optics condition insuring observability. This condition was relaxed to arbitrarily small sub-boundaries in \cite{BC}, provided the potential is known in the vicinity of the boundary. 
The inverse problem of determining the magnetic vector potential of the autonomous Schr\"odinger equation is addressed in \cite{HKSY}. The same problem for the space-varying part of the magnetic potential appearing in a non-autonomous Schr\"odinger equation is treated in \cite{CS}. In both cases, the $n$-th dimensional unknown magnetic vector potential, $n \geq 1$, is retrieved from $n$ partial Neumann data obtained by $n$-times suitably selecting the initial condition attached at the magnetic Schr\"odinger equation.

The strategy of \cite{BP,CS,HKSY,YY} relies on a Carleman inequality specifically designed for the Schr\"odinger equation, see \cite{HKSY,T,YY} for actual examples of such weighted energy estimates. 
The idea of using a Carleman estimate for solving inverse problems goes back to 1981 and was introduced by A. L. Bukhgeim and M. V. Klibanov in their seminal article \cite{BK}. Since then, the Bukhgeim-Klibanov approach has been successfully applied to parabolic, hyperbolic and Schr\"odinger systems and even to coupled systems of partial differential equations. We refer the reader to \cite{K} and references therein, for a complete survey of multidimensional inverse problems solved by the Bukhgeim-Klibanov method. 

In all the aforementioned papers, the Schr\"odinger equation under study is posed in a bounded spatial domain. The inverse problem of determining the electric potential of the Schr\"odinger equation stated in an infinite waveguide is examined in \cite{BKS1,KPS2}. This is achieved by mean of a specifically designed Carleman estimate for the Schr\"odinger equation in an unbounded cylindrical domain, which is established in \cite{KPS1}. 
All the articles listed above are concerned with the "one state" Schr\"odinger equation. In \cite{LT}, assuming that the gradient coupling vector is known, the authors show that the zero-th order coupling term of a two state magnetic Schr\"odinger equation is uniquely determined by one partial Neumann data. Recently in \cite{ZD}, the electric potential of a strongly coupled Schr\"odinger equations in a bounded spatial domain was Lipschitz stably retrieved by one partial (internal or boundary)
measurement of the solution to the system. In \cite{KRS}, the zero-th and first order coefficients of the coupling are H\"older stably recovered by finitely many partial boundary observations of the solution. The coupled Schr\"odinger equations under study in
in \cite{KRS,LT,ZD} are posed on a bounded spatial domain. In the present paper, we aim to extend the result of \cite{KRS} to the case of an unbounded waveguide.

\subsection{Notations}
Throughout this text $x = (x_1,...,x_n)$ is a generic point of $\overline{\Omega}$ that is sometimes written $x=(x^\prime,x_n)$ where $x^\prime=(x_1,\cdots,x_{n-1}) \in \overline{\omega}$ is the variable of the transverse section of $\Omega$ and $x_n \in \R$ is the longitudinal variable. For all $x=(x^\prime,x_n)\in\Gamma$, the outward unit normal $\nu$ to $\Gamma$ reads
$\nu(x)=\nu(x^\prime)=(\nu^\prime(x^\prime),0)^T$, 
where $\nu^\prime(x^\prime)\in\R^{n-1}$ is the outgoing normal vector to $\gamma$ at $x^\prime$ and $a^T$ denotes the transpose of the row vector $a$.

For all $i=1,\ldots,n$ we set $\partial_i := \frac{\partial}{\partial x_i}$ in such a way that $\nabla:=(\partial_1,\ldots,\partial_n)^T$ (resp., $\nabla^\prime:=(\partial_1,\ldots,\partial_{n-1})^T$) is the gradient operator with respect to $x=(x_1,\ldots,x_n)$ (resp., $x^\prime=(x_1,\ldots,x_{n-1})$). Similarly, we write $\partial_t = \frac{\partial}{\partial t}$. 
For the sake of shortness we write 
$\partial_{ij}^2$, $i,j=1,\ldots,n$, instead of $\partial_i \partial_j$ and as usual we denote by $\Delta$ the Laplace operator $\partial_1^2+\ldots+\partial_n^2$. Next, for any multi-index $k = (k_1,\ldots,k_n) \in \N_0^n$, where $\N_0 := \{ 0 \} \cup \N$, we put $\abs{k}:=k_1+\ldots+k_n$ and $\partial_x^k= \partial_1^{k_1} \ldots \partial_n^{k_n}$. .

Further, the symbol $\cdot$ denotes the scalar product in $\C^m$, $m \in \N$, and we set $\abs{\zeta}:= \sqrt{\zeta \cdot \zeta}$ for all $\zeta \in \C^m$. We simply write $\nabla \cdot$ for the divergence operator in $\R^n$ and we set $\partial_\nu u := \nabla u \cdot \nu = \nabla^\prime \cdot \nu^\prime$. 

Finally, for all $r>0$ and $s>0$, we introduce
$H^{r,s}(\Sigma) := L^2(0,T;H^r(\Gamma)) \cap H^s(0,T;L^2(\Gamma))$
where $H^s(\Gamma)$ denotes the usual Sobolev space on $\Gamma$ of order $s$. 

\subsection{Main results}
Prior to investigating the inverse problem under study in this article, we examine the well-posedness issue for the forward problem associated with \eqref{sy1}. For this purpose we introduce the Hamiltonian operator acting on $(C_0^\infty(Q)^\prime)^2$,
$$ \mathcal{H}(A,p,q^\pm):= \left( \begin{array}{cc} -\Delta  + q^+ & A \cdot \nabla + p \\ -A \cdot \nabla +p & -\Delta+ q^- \end{array} \right) $$
and state the following existence, uniqueness and regularity result for the solution to the IBVP
\eqref{sy1}.

\begin{proposition}
\label{pr1}
Let $m \in \N$ and assume that $\gamma$ is $\mathcal{C}^{2(m+1)}$. Let $A \in W^{2m+1,\infty}(\Omega,\R^n) \cap C^{2(m-1)}(\overline{\Omega},\R^n)$ be such that $\nabla \cdot A=0$ a.e. in $\Omega$, let
$p \in W^{2m+1,\infty}(\Omega,\R) \cap C^{2(m-1)}(\overline{\Omega},\R)$ and let $q^\pm \in W^{2m+1,\infty}(\Omega,\R) \cap C^{2(m-1)}(\overline{\Omega},\R)$ satisfy
$$ \norm{A}_{W^{2m+1,\infty}(\Omega)}+ \norm{p}_{W^{2m+1,\infty}(\Omega)}+\norm{q^+}_{ W^{2m+1,\infty}(\Omega)}+ \norm{q^-}_{W^{2m+1,\infty}(\Omega)} \leq M,$$
for some {\it a priori} fixed positive constant $M$. 
Then, for all
$g=(g^+,g^-)^T \in H^{2(m+7/4),m+7/4}(\Sigma)^2$ and all $u_0=(u_0^+,u_0^-)^T \in H^{2m+3}(\Omega)^2$ fulfilling the following compatibility conditions
\begin{equation}
\label{d4}
\partial_t^\ell g(\cdot,0)=( -i )^\ell \mathcal{H}(A,p,q^\pm)^\ell u_0\ \mbox{on}\ \Gamma,\ \ell=0,\cdots,m,
\end{equation}
the IBVP \eqref{sy1} admits a unique solution $u=(u^+,u^-)^T \in \cap_{\ell=0}^{m+1} H^{m+1-\ell}(0,T;H^{2 \ell}(\Omega)^2)$. Moreover,  there exists a positive constant $C$, depending only on $\omega$, $T$ and $M$ 
such that
\begin{equation}
\label{d4c}
\sum_{\ell=0}^{m+1} \norm{u}_{H^{m+1-\ell}(0,T;H^{2 \ell}(\Omega)^2)} \leq C \left( \norm{u_0}_{H^{2m+3}(\Omega)^2}+ \norm{g}_{H^{2(m+7/4),m+7/4}(\Sigma)^2}\right).
\end{equation}
\end{proposition}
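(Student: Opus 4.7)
The plan is to reduce \eqref{sy1} to a Cauchy problem with homogeneous Dirichlet boundary conditions by lifting the lateral trace $g$, to solve the reduced problem via abstract semigroup theory, and to bootstrap the regularity by induction on $m$.

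\textbf{Step 1: Reduction to homogeneous lateral data.} Using the trace theorem on the anisotropic Sobolev space $H^{2(m+7/4),m+7/4}(\Sigma)$, I would construct a lift $G=(G^+,G^-)^T \in \bigcap_{\ell=0}^{m+1} H^{m+1-\ell}(0,T;H^{2\ell}(\Omega)^2)$ satisfying $\gamma_0 G=g$ with norm controlled by $\norm{g}_{H^{2(m+7/4),m+7/4}(\Sigma)^2}$. Setting $v := u-G$, $v_0 := u_0 - G(\cdot,0)$ and $F := i\partial_t G - \mathcal{H}(A,p,q^\pm)G$, the unknown $v$ solves $-i\partial_t v + \mathcal{H}(A,p,q^\pm) v = F$ in $Q$ with $v(\cdot,0)=v_0$ and $\gamma_0 v = 0$. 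The compatibility conditions \eqref{d4} translate precisely into $\partial_t^\ell v(\cdot,0)=0$ on $\Gamma$ for $\ell=0,\ldots,m$, which is the exact requirement for $v$ to live in the target space of vanishing lateral trace.

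\textbf{Step 2: Self-adjointness and semigroup construction.} The divergence-free assumption $\nabla\cdot A=0$ together with the real-valuedness of $A$, $p$ and $q^\pm$ makes $\mathcal{H}(A,p,q^\pm)$ symmetric on $(C_0^\infty(\Omega))^2$: indeed, integration by parts combined with the identity $\overline{A\cdot\nabla\psi}=A\cdot\nabla\overline{\psi}$ yields
\begin{equation*}
\int_\Omega (A\cdot\nabla\varphi)\overline{\psi}\,dx = -\int_\Omega \varphi\,\overline{A\cdot\nabla\psi}\,dx,\quad \varphi,\psi \in C_0^\infty(\Omega),
\end{equation*}
and a direct computation shows that the off-diagonal coupling contributes a symmetric operator. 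Since $\mathcal{H}$ is a relatively bounded perturbation of $-\Delta\,\mathrm{Id}$ with relative bound zero (as the coefficients lie in $L^\infty$), the Kato--Rellich theorem yields self-adjointness of $\mathcal{H}$ on $L^2(\Omega)^2$ with domain $D_0 := (H^2(\Omega)\cap H_0^1(\Omega))^2$. Consequently $-i\mathcal{H}$ generates a unitary $C^0$-group on $L^2(\Omega)^2$, and Duhamel's formula produces a unique mild solution $v \in C([0,T],L^2(\Omega)^2)$ to the reduced problem.

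\textbf{Step 3: Regularity bootstrap and main obstacle.} I would induct on $m$. The base case follows from the semigroup estimate combined with the elliptic inequality $\norm{w}_{H^2(\Omega)} \lesssim \norm{\mathcal{H} w}_{L^2(\Omega)} + \norm{w}_{L^2(\Omega)}$ for $w \in D_0$. For the inductive step, differentiating the reduced equation in $t$ shows that $\partial_t v$ solves the same system with source $\partial_t F$ and initial datum $\partial_t v(\cdot,0)=i\mathcal{H} v_0 - iF(\cdot,0)\in H^{2m+1}(\Omega)^2$; the induction hypothesis applied to $\partial_t v$, together with the identity $\mathcal{H} v = i\partial_t v -F$ and iterated elliptic regularity for $\mathcal{H}$, then recovers the required regularity of $v$. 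The principal obstacle is precisely establishing the iterated elliptic estimate $\norm{w}_{H^{2(k+1)}(\Omega)} \lesssim \norm{\mathcal{H} w}_{H^{2k}(\Omega)}+ \norm{w}_{L^2(\Omega)}$, $k\le m$, uniformly over the unbounded cylinder $\Omega=\omega\times\R$; this is handled by exploiting the longitudinal translation invariance of $\Omega$, applying the classical elliptic estimates on bounded slices $\omega\times(k,k+1)$, and patching them via a locally finite $x_n$-partition of unity, the uniform $W^{2m+1,\infty}(\Omega)$ bounds on $A$, $p$, $q^\pm$ guaranteeing that the constants in the local estimates are independent of $k$.
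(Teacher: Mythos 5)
Your proposal is correct and follows essentially the same route as the paper, which simply observes that the divergence-free condition on $A$ makes $\mathcal{H}(A,p,q^\pm)$ with homogeneous Dirichlet conditions self-adjoint (hence $-iH$ $m$-dissipative) and then defers to the lifting-plus-bootstrap argument of \cite[Lemmas 2.1 and 2.3]{KRS}. You have merely filled in the details the paper leaves to that reference, and you correctly single out the one point where the unbounded cylinder requires extra care, namely the uniformity of the iterated elliptic estimates, which your translation-invariance and partition-of-unity argument handles adequately.
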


Notice that the divergence-free condition on $A$ requested by Proposition \ref{pr1} is to guarantee that
$\mathcal{H}(A,p,q^\pm)$ endowed with homogeneous Dirichlet boundary condition on $\Gamma$, has a self-adjoint realization $H(A,p,q^\pm)$ in $L^2(\Omega)^2$, see \cite[Lemma 2.1]{KRS}. As a consequence the operator $-i H(A,p,q^\pm)$ is $m$-dissipative in $L^2(\Omega)^2$, and since the IBVP \eqref{sy1} is equivalently rewritten as 
$$ \left\{ \begin{array}{ll} -i \partial_t u +\mathcal{H}(A,p,q^\pm) u = 0 & \mbox{in}\ Q \\ u(\cdot,0)=u_0 & \mbox{in}\ \Omega \\ u=g & \mbox{on}\ \Sigma, \end{array} \right. $$
the statement of Proposition \ref{pr1} follows by arguing in the same way as in the proof of \cite[Lemma 2.3]{KRS}. 

We point out that the regularity assumptions on the coefficients $A$, $p$ and $q^\pm$, the initial states $u_0^\pm$ and the boundary conditions $g^\pm$, in Proposition \ref{pr1}, are only sufficient conditions ensuring a higher order of regularity of the solution $u^\pm$ to \eqref{sy1}, as requested by the analysis of the inverse problem under study in this article. As a matter of fact the Bukhgeim-Klibanov method requires $\partial_t u^\pm$ and $\partial_t \nabla u^\pm$ to be bounded in $Q$, which can be achieved upon taking $m$ in Proposition \ref{pr1}, sufficiently large relative to $n$. Namely, we choose
\begin{equation}
\label{def-N}
N \in \N \cap \left( \frac{n+2}{4}+1 ,\frac{n+2}{4}+ 2 \right],
\end{equation}
pick $M$, $\kappa$, $\varrho$, $\af$, $\pf$ and $\qf$ in $\R_+$, and for $A_0\in W^{2N+1,\infty}(\Omega,\R^n)\cap C^{2(N-1)}(\overline{\Omega},\R^n)$, $p_0 \in W^{2N+1,\infty}(\Omega,\R)\cap C^{2(N-1)}(\overline{\Omega},\R)$ and $q_0^\pm \in W^{2N+1,\infty}(\Omega,\R)\cap C^{2(N-1)}(\overline{\Omega},\R)$, we introduce the set of unknown electric potentials
as
\begin{eqnarray}
\label{pco}
\cP_{\pf}(p_0) & := & \left\{p\in W^{2N+1,\infty}(\Omega,\R) \cap C^{2(N-1)}(\overline{\Omega},\R)\ \mbox{s.t.}\ \norm{p}_{W^{2N+1,\infty}(\Omega)} \leq M, \right.\\
& & \left. \partial_x^k p = \partial_x^k p_0\ \mbox{on}\ \Gamma,\ k=0,\ldots, 2(N-1)\ \mbox{and}\ \abs{(p-p_0)(\cdot,x_n)} \leq \pf e^{-\kappa \langle x_n\rangle^{\varrho}},\ x_n \in \R \right\},
\nonumber
\end{eqnarray}
the set of unknown zero-th order coupling coefficients as $\cP_{\qf}(q_0^\pm)$, 
and the set of unknown first order coupling vectors as
\begin{eqnarray}
\label{aco}
\cA_\af(A_0) & := & \left\{A \in W^{2N+1,\infty}(\Omega,\R^n)\cap C^{2(N-1)}(\overline{\Omega},\R)\ \mbox{s.t.}\ \norm{A}_{W^{2N+1,\infty}(\Omega)^n}\leq M,\ \nabla \cdot A=0\ \mbox{in}\ \Omega, \right. \\
& &  \left. \ \partial_x^k A = \partial_x^k A_0\ \mbox{on}\ \Gamma,\ \abs{k}=0,\ldots, 2(N-1)\  \mbox{and}\  \abs{(A-A_0)(\cdot,x_n)} \leq \af e^{-\kappa \langle x_n\rangle^{\varrho}},\ x_n \in \R \right\}. \nonumber
\end{eqnarray}
Here, the notation $\partial_x^k$ for $\abs{k}=m \in \N_0$ is a shorthand for $\partial_1^{k_1} \ldots \partial_n^{k_n}$ where $k=(k_1,\ldots,k_n) \in \N_0^n$ satisfies $\abs{k}=k_1+\ldots+k_n=m$. 

Then, the main result of this article can be stated as follows. 

\begin{theorem}
\label{thm-main}
Assume that $\gamma$ is $\mathcal{C}^{2(N+1)}$. For $j=1,2$, let $A_j\in\cA_\af(A_0)$ satisfy \begin{equation}
\label{h1}
\exists y_* \in \R_+,\ a_{1,n}(x^\prime,x_n)=a_{2,n}(x^\prime,x_n),\ x^\prime \in \omega,\ x_n \in (-y_*,y_*),
\end{equation}
let $p_j\in\cP_\pf(p_0)$ and let $q_j^\pm \in\cP_\qf(q^\pm_0)$. 

Then, there exist a sub-boundary $\gamma_* \subset \partial \omega$ and a set of $n+1$ initial states $u_0^k=(u_0^{+,k},u_0^{-,k})^T \in H^{2N+3}(\Omega)^2$ and boundary conditions $g^k=(g^{+,k},g^{-,k})^T \in H^{2(N+7 \slash 4), N + 7 \slash 4}(\Sigma)^2$, $k=1,\ldots, n+1$, fulfilling the compatibility conditions
\begin{equation}
\label{cck}
\partial_t^\ell g^k(\cdot,0)=( -i )^\ell \mathcal{H}(A_0,p_0,q_0^\pm)^\ell u_0^k\ \mbox{on}\ \Gamma,\ \ell=0,\cdots,N,
\end{equation}
such that for all $\theta \in \left( 0 , \frac{1}{2} \right)$, the following estimate
\begin{eqnarray}
\label{se}
& & \norm{A_1-A_2}^2_{L^2(\Omega)}+\norm{p_1-p_2}^2_{L^2(\Omega)}+\norm{q_1^+-q_2^+}^2_{L^2(\Omega)}+\norm{q_1^- - q_2^-}^2_{L^2(\Omega)}\\
& \leq & C \sum_{k=1}^{n+1} \left( \norm{\partial_\nu \partial_t u_1^{-,k}- \partial_\nu \partial_t u_2^{-,k}}^{\theta}_{L^2(\Sigma_*)}
+\norm{\partial_\nu \partial_t u_1^{+,k} -\partial_\nu \partial_t u_2^{+,k}}^\theta_{L^2(\Sigma_*)} \right), \nonumber
\end{eqnarray}
holds for some positive constant $C$ depending only on $\omega$, $T$, $\gamma^*$, $M$, $y_*$, $\theta$, $\kappa$, $\varrho$, $\af$, $\pf$, $\qf$ and $(u_0^{\pm,k},g^{\pm,k})$, $k=1,\ldots,n+1$. Here, $\Sigma_*:=\gamma_*\times\R\times(0,T)$ and $u_j^k=(u_j^{+,k},u_j^{-,k})^T$, for $j=1,2$, is the solution to \eqref{sy1} given by Proposition \ref{pr1}, where $(A_j,p_j,q_j^\pm,u_0^{\pm,k},g^{\pm,k})$ is substituted for $(A,p,q^\pm,u_0^{\pm},g^{\pm})$.
\end{theorem}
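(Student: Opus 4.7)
I would follow the Bukhgeim--Klibanov strategy tailored to two-state systems on an infinite cylinder, combining the global Carleman estimate for the Schr\"odinger operator on $\Omega$ established in \cite{KPS1} with the algebraic disentangling of unknowns introduced in \cite{CS,HKSY,KRS}. First, set $v^{\pm,k} := u_1^{\pm,k} - u_2^{\pm,k}$ for $k=1,\ldots,n+1$. Each $v^{\pm,k}$ solves a coupled Schr\"odinger system with zero boundary and initial data and a time-independent source
$$S^{\pm,k} := -(q_1^\pm - q_2^\pm)\, u_2^{\pm,k} \mp (A_1-A_2) \cdot \nabla u_2^{\mp,k} - (p_1-p_2)\, u_2^{\mp,k}.$$
Differentiating in $t$ and putting $w^{\pm,k} := \partial_t v^{\pm,k}$, the new function $w^{\pm,k}$ satisfies a coupled system with homogeneous lateral data but initial value $w^{\pm,k}(\cdot,0) = i\, S^{\pm,k}(\cdot,0)$, which is precisely the linear combination of the unknowns to be recovered.

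Next I would import and adapt to the matrix operator $\mathcal{H}(A_1,p_1,q_1^\pm)$ the global Carleman estimate of \cite{KPS1}, with weight $\varphi(x,t)=e^{\lambda \psi(x)}/(t(T-t))^\sigma$ and $\psi$ pseudoconvex with respect to the Schr\"odinger symbol on $\Omega$. The gradient coupling $\pm A_1\cdot\nabla w^{\mp,k}$ is lower-order in the large Carleman parameter $s$ and can be absorbed into the left-hand side after choosing $s$ large; the zero-th order couplings $p_1 w^{\mp,k}$ and $q_1^\pm w^{\pm,k}$ pose no difficulty. Combining this Carleman inequality with a Bukhgeim-type identity obtained by integration by parts in $t$ starting from $v^{\pm,k}(\cdot,0)=0$ yields an $L^2$-weighted control of $w^{\pm,k}(\cdot,0)$ in terms of $\partial_\nu w^{\pm,k}$ on $\Sigma_*$ and weighted norms of $\partial_t S^{\pm,k}$ in $Q$. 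These last terms are controlled by the coefficient differences themselves thanks to an $L^\infty$ bound on $\partial_t u_2^{\pm,k}$ and $\partial_t\nabla u_2^{\pm,k}$, which is exactly what the choice of $N$ in \eqref{def-N} produces through Proposition \ref{pr1} and Sobolev embedding.

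The third step is the algebraic inversion. Writing $w^{\pm,k}(x,0)$ explicitly as a linear combination of the unknowns $A_1-A_2$, $p_1-p_2$, $q_1^\pm-q_2^\pm$ at the point $x$, with coefficients depending on $u_0^{\pm,k}(x)$ and $\nabla u_0^{\pm,k}(x)$, I would select $n+1$ initial data so that the resulting $2(n+1)\times(n+3)$ coefficient matrix admits, for a.e. $x\in\Omega$, a uniformly bounded left inverse, as in \cite{KRS}. The longitudinal component $a_{1,n}-a_{2,n}$ is handled separately: it vanishes on the strip $\{\abs{x_n}<y_*\}$ by \eqref{h1}, and the divergence-free constraint
$$\partial_n(a_{1,n}-a_{2,n}) = -\sum_{i=1}^{n-1}\partial_i(a_{1,i}-a_{2,i})$$
lets us recover it outside that strip by integration in $x_n$ from the transverse components already controlled by the Carleman bound. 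Splitting $\Omega$ into $\{\abs{x_n}\le R\}$ (controlled by the Carleman weight) and its complement (controlled by the exponential tails built into $\cA_\af(A_0)$, $\cP_\pf(p_0)$, $\cP_\qf(q_0^\pm)$), optimizing in $R$ and $s$, and interpolating between the resulting quadratic bound and the a priori $W^{2N+1,\infty}$ estimate via a Sobolev interpolation inequality, converts the $L^2$-Carleman estimate into the H\"older estimate \eqref{se} for any $\theta\in(0,1/2)$.

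The main obstacle is the Carleman inequality itself: the weight function $\varphi$ must simultaneously accommodate the unboundedness of $\Omega$ in $x_n$ (growing polynomially in $x_n$ as in \cite{KPS1} in order to keep a pseudoconvex profile), absorb the gradient coupling between the two states (requiring $s$ and $\lambda$ to be tuned jointly), and remain dominated by the exponential decay $e^{-\kappa \langle x_n\rangle^\varrho}$ of the unknowns from \eqref{pco}--\eqref{aco} when $\abs{x_n}$ is large, so that the tail integrals do not destroy the argument. A secondary but nontrivial difficulty is the explicit construction of the $n+1$ initial data $u_0^k$ making the algebraic system uniformly invertible while meeting the high-order compatibility conditions \eqref{cck}; this is achieved by a perturbation/genericity argument around explicit polynomial or exponential profiles along the $e_1,\ldots,e_n$ directions plus a constant profile for the zero-th order block, as in \cite{CS,KRS}.
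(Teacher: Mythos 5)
Your overall architecture (linearize, differentiate in time, apply the Carleman estimate of \cite{KPS1} to the coupled system, absorb the gradient coupling for large $s$, invert the algebraic system $w^{\pm,k}(\cdot,0)=iS^{\pm,k}(\cdot,0)$ by choosing $n+1$ initial data, split $\Omega$ into a bounded piece and a tail controlled by the prescribed decay, and optimize) matches the paper. But two of your concrete choices would fail on the unbounded cylinder. First, your proposed initial profiles --- ``polynomial or exponential profiles along the $e_1,\ldots,e_n$ directions plus a constant profile'' --- do not belong to $H^{2N+3}(\Omega)$ when $\Omega=\omega\times\R$ is infinite, so they are inadmissible as data for Proposition \ref{pr1}. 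This is exactly the point the paper flags as the delicate part of the unbounded setting: one must build in decay along $x_n$, and the paper takes $u_0^{-,1}=\langle x_n\rangle^{-\frac{1+\epsilon}{2}}$, its swap, and $u_0^{\pm,k+2}=x_k\langle x_n\rangle^{-\frac{1+\epsilon}{2}}$ for $k=1,\ldots,n-1$ (only $n+1$ data, not a generic $2(n+1)\times(n+3)$ system).

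Second, your recovery of the longitudinal component $a_n=a_{1,n}-a_{2,n}$ via $\partial_n a_n=-\sum_{i<n}\partial_i a_i$ and integration in $x_n$ from the strip where \eqref{h1} makes $a_n$ vanish does not close: the Bukhgeim--Klibanov step only gives weighted $L^2$ control of the components $a_i$ themselves, not of $\nabla^\prime\cdot(a_1,\ldots,a_{n-1})$, and integrating over the unbounded interval $\abs{x_n}>y_*$ does not preserve $L^2$ bounds without decay of those derivatives (the admissible class \eqref{aco} imposes decay on $A-A_0$ but not on its derivatives). The paper instead recovers $a_n$ directly and this is the key trick: because the initial data decay in $x_n$, the term $A\cdot\nabla u_0^{\mp,k}$ contains $\partial_n\langle x_n\rangle^{-\frac{1+\epsilon}{2}}\,a_n\propto \langle x_n\rangle^{-\frac{5+\epsilon}{2}}x_n a_n$, and the hypothesis \eqref{h1} yields the pointwise bound $\abs{x_n a_n}\geq y_*\abs{a_n}$ on all of $\Omega$, converting control of $x_n a_n$ into control of $a_n$. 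Two further inaccuracies worth noting: the Carleman weight of \cite{KPS1} is built from a function of the transverse variable $x^\prime$ only (it does not grow in $x_n$, which is precisely why the tail $\Omega\setminus\Omega_y$ must be handled by the decay hypotheses rather than by the weight), and the H\"older exponent $\theta<\frac{1}{2}$ in \eqref{se} comes from optimizing the cut-off $y$ against the boundary data and treating the large-data case via the a priori decay bounds, not from a Sobolev interpolation with the $W^{2N+1,\infty}$ norm.
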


\subsection{Brief comments}
Theorem \ref{thm-main} claims that $n+2$ Neumann data stably determine $n+2$ unknown scalar coefficients (strictly speaking there are $n+3$ unknown scalar coefficients in the inverse problem that Theorem \ref{thm-main} is dealing with, but since the $n$ components of the gradient coupling vector are bound together through some divergence free condition, they only amount for $n-1$ free unknown scalar coefficients). From the viewpoint of the analysis of inverse problems, such a result is thus somehow optimal.

The statement and the strategy of the proof of Theorem \ref{thm-main} are very similar to the ones of \cite[Theorem 1.2]{KRS}, which holds for a bounded spatial domain $\Omega$. Nevertheless, there are two major differences in the derivation of Theorem \ref{thm-main} as compared to the one of \cite[Theorem 1.2]{KRS}. Firstly, the Carleman estimate that is used in Section \ref{sec-pr} below is designed for a Schr\"odinger equation in an unbounded cylindrical domain, and it is slightly different from the one used in \cite{KRS}, which is specific to the Schr\"odinger equation in a bounded domain. Secondly, the construction of the initial states $u_0$ used for probing the system in the analysis of the inverse problem under examination in this article, is more delicate than in \cite{KRS}. This is due to the fact that it is technically more challenging to design a suitable set of $L^2(\Omega)$-initial states $u_0$ when the domain $\Omega$ is infinitely extended, than in the case where $\Omega$ is bounded. As can be seen from Section \ref{sec-pr}, this $L^2$-condition will be satisfied by introducing an additional decay with respect to the infinite direction of the waveguide.

\subsection{Outline}
The paper is designed as follows: In the following section we collect several technical results needed for the proof of Theorem \ref{thm-main}, which is given in Section \ref{sec-pr}.

\section{Preliminaries}
We first establish that the solution to \eqref{sy1} is bounded in $Q$.

\subsection{Boundedness of the solution}
The result we have in mind is as follows.

\begin{lem}
\label{lm1}
Assume that conditions of Proposition \ref{pr1} are satisfied with $m=N$, where $N$ is the same as in \eqref{def-N}. Then, the solution $u$ to \eqref{sy1} lies in $W^{1,\infty}(0,T;W^{1,\infty}(\Omega)^2)$ and satisfies
$$
\norm{u}_{W^{1,\infty}(0,T;W^{1,\infty}(\Omega)^2)} \leq C,
$$
for some positive constant $C$ depending only on $\omega$, $T$, $M$, $u_0$ and $g$.
\end{lem}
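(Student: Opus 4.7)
The plan is to derive Lemma \ref{lm1} as a direct consequence of Proposition \ref{pr1} applied with $m=N$, combined with Sobolev embeddings in both the time and space variables. The only thing to check is that the choice of $N$ in \eqref{def-N} is large enough to push the regularity of $u$ obtained from \eqref{d4c} past the threshold for continuous embedding into $W^{1,\infty}(0,T;W^{1,\infty}(\Omega)^2)$.

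First I would invoke Proposition \ref{pr1} with $m=N$. Picking the index $\ell=N-1$ in the intersection $\cap_{\ell=0}^{N+1} H^{N+1-\ell}(0,T;H^{2\ell}(\Omega)^2)$ produces $u\in H^2(0,T;H^{2(N-1)}(\Omega)^2)$, together with the bound
\begin{equation*}
\norm{u}_{H^2(0,T;H^{2(N-1)}(\Omega)^2)} \leq C\left(\norm{u_0}_{H^{2N+3}(\Omega)^2}+\norm{g}_{H^{2(N+7/4),N+7/4}(\Sigma)^2}\right),
\end{equation*}
coming directly from \eqref{d4c}.

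Next I would appeal to the continuous embedding $H^2(0,T)\hookrightarrow W^{1,\infty}(0,T)$ in the time variable, which is classical in dimension one since $2>1+1/2$. For the space variable, the condition \eqref{def-N} yields $N>\frac{n+2}{4}+1$, hence $2(N-1)>\frac{n}{2}+1$, which is precisely the threshold that guarantees the Sobolev embedding $H^{2(N-1)}(\Omega)\hookrightarrow W^{1,\infty}(\Omega)$ on the cylindrical domain $\Omega=\omega\times\R$ (the cone property holds since $\omega$ is bounded with smooth boundary, or alternatively by using a finite covering of $\Omega$ by translated copies of $\omega\times(-R,R)$ and the uniformity of the Sobolev constants). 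Combining these two embeddings gives
\begin{equation*}
H^2(0,T;H^{2(N-1)}(\Omega)^2)\hookrightarrow W^{1,\infty}(0,T;W^{1,\infty}(\Omega)^2),
\end{equation*}
which yields both the regularity claim and the desired estimate on $\norm{u}_{W^{1,\infty}(0,T;W^{1,\infty}(\Omega)^2)}$.

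The main (and only) obstacle to address carefully is the verification that \eqref{def-N} is tuned so that $2(N-1)$ exceeds the critical Sobolev exponent $n/2+1$ on the unbounded cylinder $\Omega$; once this is checked the rest is a mechanical chain of embeddings. I expect the full proof to take no more than a few lines.
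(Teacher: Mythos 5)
Your proposal is correct and follows essentially the same route as the paper: extract $u\in H^2(0,T;H^{2(N-1)}(\Omega)^2)$ from Proposition \ref{pr1} with $m=N$, check that \eqref{def-N} gives $2(N-1)>\frac{n}{2}+1$, and conclude via the Sobolev embedding $H^k(\Omega)\hookrightarrow L^\infty(\Omega)$ for $k>\frac{n}{2}$ on the unbounded cylinder (which the paper justifies by citing \cite[Lemma 2.7]{KPS2} rather than a covering argument) together with the estimate \eqref{d4c}.
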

\begin{proof}
We have $u \in H^2(0,T,H^{2(N-1)}(\Omega)^2)$ by Proposition \ref{pr1}, with $2(N-1)>\frac{n}{2}+1$ from \eqref{def-N}.
Since $H^k(\Omega)$ is continuously embedded in $L^\infty(\Omega)$ for all $k> \frac{n}{2}$, according to \cite[Lemma 2.7]{KPS2} (which extends the corresponding well-known Sobolev embedding theorem in $\R^n$, see e.g. \cite[Corollary IX.13]{Bre} or \cite[Section 5.10, Problem 18]{Eva}, to the case of the unbounded cylindrical domain $\Omega$), the result follows from this and \eqref{d4c}.
\end{proof}

\subsection{Global Carleman estimate for the Schr\"odinger equation in $\omega \times \R$}
For further use we introduce a global Carleman estimate specifically designed for the Schr\"odinger equation in the unbounded cylindrical domain $\Omega$, which is borrowed from \cite[Proposition 3.3 and Lemma 4.2]{KPS1}. 

For this purpose we pick a function $\alpha\in C^4(\overline{\omega},\R_+)$ and an open subset $\gamma_*\subset\partial\omega$ satisfying the following conditions:
\begin{Assumption}
\hfill 
\label{hyp}
\begin{enumerate}[(i)]
\item $\exists c \in \R_+$ s.t. $\abs{\nabla^\prime\alpha(x^\prime)}\geq c$ for all $x^\prime\in\omega$. 
\item $\forall x^\prime\in \gamma \setminus\gamma_*$, $\partial_\nu\alpha(x^\prime)=\nabla^\prime\alpha(x^\prime)\cdot\nu^\prime(x^\prime)<0$.
\item $\exists \lambda_0 \in \R_+$,\ $\exists c \in \R_+$ s.t. 
$$\lambda \abs{\nabla^\prime\alpha(x^\prime)\cdot\zeta}^2+D^2\alpha(x^\prime,\zeta)\geq c \abs{\zeta}^2,\ \zeta\in\R^{n-1},\ x^\prime\in\omega,\ \lambda \ge \lambda_0, $$ 
where $D^2\alpha(x^\prime):=\left( \partial_{i,j}^2 \alpha(x^\prime)\right)_{1\leq i,j\leq n-1}$ and $D^2\alpha(x^\prime,\zeta)$ denotes the $\R^{n-1}$-scalar product of $D^2\alpha(x^\prime)\zeta$ with $\zeta$.
\end{enumerate}
\end{Assumption}
We point out that there exist $\alpha$ and $\gamma_*$ fulfilling the above conditions (i), (ii) and (iii). As a matter of fact, for all $x^\prime_0\in\R^{n-1}\setminus\overline{\omega}$ fixed, this is the case of the function $\alpha(x^\prime)=\abs{x^\prime-x^\prime_0}^2$ and any open subset $\gamma_*\subset \gamma$ such that $\{x^\prime\in \gamma;\ (x^\prime-x^\prime_0)\cdot\nu(x^\prime)\geq 0\}\subset\gamma_*$. 

Next, putting $K:=r \norm{\alpha}_{L^\infty(\omega)}$ for some $r \in (1,+\infty)$, we set
\begin{equation}
\label{ee1}
\beta(x):=\alpha(x^\prime)+ K,\ x=(x^\prime,x_n)\in\Omega,
\end{equation}
and 
we introduce the following weight functions on $\tilde{Q}:=\Omega \times (-T,T)$:
\begin{equation}
\label{k2}
\varphi(x,t):=\frac{e^{2\beta(x)}}{(T+t)(T-t)}\ \mbox{and}\ \eta(x,t):=\frac{e^{2 K}-e^{ \beta(x)}}{(T+t)(T-t)},\ (x,t)\in \tilde{Q}.
\end{equation}
Let us notice for further use that
\begin{equation}
\label{a1}
\eta(x,t) \geq \eta_0(x) >0,\ (x,t) \in \tilde{Q},
\end{equation}
where $\eta_0(x):= \eta(0,x)$ for all $x \in \Omega$.
This being said, we may now state the global Carleman estimate established in \cite[Proposition 3.3 and Lemma 4.2]{KPS1}.

\begin{proposition}\label{pro1}
Suppose that $\alpha$ and $\gamma_*$ fulfill Assumption \ref{hyp}. Let $\beta$ be as in \eqref{ee1} and let $\varphi$ and $\eta$ be defined by \eqref{k2}. Then, there exist two constants $s_0>0$ and $C>0$, depending only on $T$, $\omega$ and $\gamma_*$, such that the estimate
\begin{eqnarray*}
& &s^{-1/2}\norm{e^{-s\eta}\nabla^\prime w}^2_{L^2(\tilde{Q})}+s^{-1/2}\norm{e^{-s\eta}w}^2_{L^2(\tilde{Q})}+\norm{e^{-s\eta_0}w(\cdot,0)}^2_{L^2(\Omega)} \\
&\leq & Cs^{-3/2}\left(s\norm{e^{-s\eta}\varphi^{1/2} \abs{\partial_\nu\beta}^{\frac{1}{2}}\partial_\nu w}^2_{L^2(\tilde{\Sigma}_*)}+\norm{e^{-s\eta}L w}^2_{L^2(\tilde{Q})}\right)
\end{eqnarray*} 
holds whenever $s\geq s_0$ and $w\in L^2(-T,T;H^1_0(\Omega))$ satisfies $L w\in L^2(\tilde{Q})$ and $\partial_\nu w\in L^2(\tilde{\Sigma}_*)$. Here, $\tilde{\Sigma}_*:=(-T,T)\times\Gamma_*$ and $\Gamma_*:=\gamma_*\times\R$.
\end{proposition}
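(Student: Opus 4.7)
The plan is to prove this via the standard Carleman strategy (conjugation, symmetric/skew-symmetric splitting, integration by parts), exploiting the crucial feature that the weight $\alpha$ — and hence $\beta$, $\eta$ and $\varphi$ — depends only on the transverse variable $x^\prime$ and not on the longitudinal variable $x_n$. Consequently $\partial_n \eta \equiv 0$, so the exponential conjugation commutes with $\partial_n$ and integration by parts in $x_n$ produces no boundary contributions. This is the device that allows the bounded-domain Carleman machinery for Schr\"odinger equations to be transported to the unbounded cylinder $\omega \times \R$.

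First I would set $z := e^{-s\eta} w$ and compute
\[
P_s z := e^{-s\eta} L(e^{s\eta} z) = -i \partial_t z - \Delta z - 2s \nabla^\prime \eta \cdot \nabla^\prime z - s (\Delta^\prime \eta) z - is (\partial_t \eta) z - s^2 \abs{\nabla^\prime \eta}^2 z,
\]
with $L := -i\partial_t - \Delta$, then split $P_s = P_+ + P_-$ into the $L^2(\tilde{Q})$-formally-symmetric part $P_+ z := -\Delta z - s^2 \abs{\nabla^\prime \eta}^2 z$ and the skew-symmetric part $P_- z := -i\partial_t z - 2s \nabla^\prime \eta \cdot \nabla^\prime z - s(\Delta^\prime \eta) z$, carrying the remaining lower-order term $-is(\partial_t \eta) z$ along as a perturbation. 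Expanding $\norm{P_+ z + P_- z}^2_{L^2(\tilde{Q})} = \norm{P_+ z}^2 + \norm{P_- z}^2 + 2\,\mathrm{Re}\langle P_+ z, P_- z\rangle$, I would compute the cross term by iterated integration by parts in $x^\prime$ and $t$. Using Assumption \ref{hyp}(iii) — with the parameter $r > 1$ in $K = r \norm{\alpha}_{L^\infty(\omega)}$ chosen so that $\abs{\nabla^\prime \alpha}^2$ dominates all Hessian contributions — the interior portion is bounded below by $c \bigl( s \int_{\tilde{Q}} e^{-2s\eta} \varphi \abs{\nabla^\prime z}^2 + s^3 \int_{\tilde{Q}} e^{-2s\eta} \varphi^3 \abs{z}^2 \bigr)$; the boundary portion on $\tilde{\Sigma} := (-T,T) \times \Gamma$ reduces to $2 s \int_{\tilde{\Sigma}} e^{-2s\eta} \varphi (\partial_\nu \beta) \abs{\partial_\nu z}^2\, d\sigma\, dt$, whose sign on $\tilde{\Sigma} \setminus \tilde{\Sigma}_*$ is controlled by Assumption \ref{hyp}(ii) ($\partial_\nu \beta < 0$) so that it can be discarded there, while on $\tilde{\Sigma}_*$ it supplies the observation term of the stated inequality once one returns to the variable $w$.

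Assembling these bounds and fixing $s_0$ large enough to absorb the lower-order $\partial_t \eta$-contribution, I would obtain the strong Carleman estimate
\[
s \norm{e^{-s\eta} \varphi^{1/2} \nabla^\prime w}^2_{L^2(\tilde{Q})} + s^3 \norm{e^{-s\eta} \varphi^{3/2} w}^2_{L^2(\tilde{Q})} \leq C \bigl( \norm{e^{-s\eta} L w}^2_{L^2(\tilde{Q})} + s \norm{e^{-s\eta} \varphi^{1/2} \abs{\partial_\nu \beta}^{1/2} \partial_\nu w}^2_{L^2(\tilde{\Sigma}_*)} \bigr)
\]
valid for $s \geq s_0$. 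Dividing by $s^{3/2}$ and bounding $\varphi$ from below by a positive constant (possible since $\beta$ is bounded on $\overline{\omega}$ uniformly in $x_n$) delivers the first two terms on the left of the stated inequality. To install the initial-trace term $\norm{e^{-s\eta_0} w(\cdot,0)}^2_{L^2(\Omega)}$, I would integrate the differential identity $\partial_t(e^{-2s\eta} \abs{w}^2) = -2s(\partial_t \eta) e^{-2s\eta} \abs{w}^2 + 2\,\mathrm{Re}(e^{-2s\eta} \bar{w}\, \partial_t w)$ from $t = -T$ to $t = 0$ and in $x \in \Omega$: the left-hand boundary terms reduce to $\norm{e^{-s\eta_0} w(\cdot,0)}^2_{L^2(\Omega)}$ because $e^{-s\eta} \equiv 0$ at $t = -T$ by \eqref{k2}, while the right-hand integrand is controlled by expressing $-i \partial_t w = L w + \Delta w$, integrating $\Delta w$ against $e^{-2s\eta} \bar{w}$ by parts in $x^\prime$ (no boundary term since $w \in H^1_0(\Omega)$), and using the strong Carleman estimate just established to bound each contribution by (a constant times) the right-hand side of the stated inequality.

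The main obstacle is twofold. Interiorly, producing the positive lower bound rests on a delicate pseudo-convexity argument in which every commutator-induced cross term must be absorbed by the single positive contribution from Assumption \ref{hyp}(iii); tracking the correct powers of $s$ and $\varphi$ through the many integration-by-parts manipulations is the error-prone core, and the thresholds $s_0$ and $r$ must be chosen very carefully. Equally delicate is the justification of the formal manipulations in the unbounded cylinder: since $e^{-s\eta}$ does not decay in $x_n$, the weight provides no help with $x_n$-integrability, so the $x_n$-integrations by parts must be validated by a density argument — approximating $w$ by truncations $\chi_R(x_n) w$ with $\chi_R$ a smooth cut-off of $[-R,R]$, establishing the inequality for these, and passing to the limit using $w \in L^2(-T,T;H^1_0(\Omega))$ together with $L w \in L^2(\tilde{Q})$.
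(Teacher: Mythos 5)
First, a point of reference: the paper does not prove Proposition \ref{pro1} at all --- it is quoted verbatim from \cite[Proposition 3.3 and Lemma 4.2]{KPS1} --- so your attempt has to be measured against the standard derivation in that reference. You correctly identify the one idea that makes the unbounded cylinder tractable (the weight depends only on $x^\prime$, so $\partial_n\eta\equiv 0$ and the $x_n$-integrations by parts, justified by truncation, produce nothing at infinity), and the general scheme (conjugation, splitting, cross term, pseudo-convexity from Assumption \ref{hyp}(iii), sign of the boundary term from (ii)) is the right one. However, your splitting is wrong for the Schr\"odinger operator, and this is not cosmetic. The operator $-i\partial_t$ is formally \emph{symmetric} in $L^2(\tilde{Q})$ (the endpoint contributions at $t=\pm T$ vanish because $e^{-s\eta}$ does), not skew-symmetric, so your $P_-$ is neither symmetric nor skew. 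Concretely, the cross term $2\,\mathrm{Re}\langle P_+z,P_-z\rangle_{L^2(\tilde{Q})}$ then contains
\[
2\,\mathrm{Re}\,\langle -s^2\abs{\nabla^\prime\eta}^2 z,\,-i\partial_t z\rangle \;=\; 2s^2\int_{\tilde{Q}}\abs{\nabla^\prime\eta}^2\,\mathrm{Im}\para{z\,\partial_t\bar z}\,dx\,dt,
\]
together with the analogous term $-2\,\mathrm{Im}\int_{\tilde{Q}}\nabla z\cdot\partial_t\nabla\bar z$ coming from $\langle-\Delta z,-i\partial_t z\rangle$. These are $O(s^2)$ quantities carrying phase-derivative information: they are not total derivatives, do not reduce to commutators, and are not dominated by the positive terms $s^3\varphi^3\abs{z}^2+s\varphi\abs{\nabla^\prime z}^2$ you intend to produce (substituting $\partial_t z$ back from the equation only trades them for an $O(s^2)$ multiple of $\abs{\nabla z}^2$, which overwhelms the $O(s)$ gradient term). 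The decomposition actually used in \cite{KPS1}, following \cite{BP}, puts $-i\partial_t$ \emph{together with} $-\Delta-s^2\abs{\nabla^\prime\eta}^2$ in the symmetric part $M_1$, and takes $M_2z=-2s\nabla^\prime\eta\cdot\nabla^\prime z-s(\Delta^\prime\eta)z-is(\partial_t\eta)z$ as the skew part; only then does $2\,\mathrm{Re}\langle M_1z,M_2z\rangle$ reduce to controllable commutator and total-derivative terms.

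A second, smaller gap concerns the trace at $t=0$ and the precise power $s^{-3/2}$ in the statement. In your energy identity, after writing $-i\partial_t w=Lw+\Delta w$ and integrating $\Delta w$ by parts, one is left with $4s\,\mathrm{Im}\int e^{-2s\eta}(\nabla^\prime\eta\cdot\nabla^\prime w)\bar w$; Cauchy--Schwarz against $s\norm{e^{-s\eta}\varphi^{1\slash 2}\nabla^\prime w}\,\norm{e^{-s\eta}\varphi^{1\slash 2}w}$ and your strong estimate only give $\norm{e^{-s\eta_0}w(\cdot,0)}^2\le Cs^{-1}(\cdots)$, half a power of $s$ short of what is claimed. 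The clean route again passes through the correct splitting: with $z=e^{-s\eta}w$ one has $\norm{z(\cdot,0)}^2_{L^2(\Omega)}=-2\,\mathrm{Im}\int_{\Omega\times(-T,0)}\bar z\,M_1z\,dx\,dt$, because $\mathrm{Im}\int_\Omega\bar z\Delta z\,dx=0$ and $\abs{\nabla^\prime\eta}^2\abs{z}^2$ is real; since the cross-term computation retains $\norm{M_1z}^2_{L^2(\tilde{Q})}$ unweighted on the left of the strong estimate, Cauchy--Schwarz with $\norm{z}^2_{L^2(\tilde{Q})}\le Cs^{-3}(\cdots)$ yields exactly the $Cs^{-3/2}$ factor. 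The remaining elements of your outline --- discarding the boundary term off $\gamma_*$, recovering the stated weak form by dividing by $s^{3/2}$ and using $\varphi\ge e^{2K}\slash T^2>0$, and the density/truncation argument in $x_n$ --- are sound, but as written the core positivity step would fail.
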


Armed with Proposition \ref{pro1}, we turn now to proving the main result of this article.

\section{Proof of Theorem \ref{thm-main}}
\label{sec-pr}
\subsection{Linearization, time-differentiation and all that}
We start by linearizing the system \eqref{sy1}. For this purpose we consider the two solutions $u_j=(u^+_j,u^-_j)^T$, $j=1,2$, to the IBVP \eqref{sy1} where $(A_j,p_j,q^\pm_j)$ is substituted for $(A,p,q^\pm)$. Then, $u^\pm:=u^\pm_1-u^\pm_2$ solves
\begin{equation}
\label{sy2}
 \left\{
\begin{array}{ll}
-i\partial_t u^+-\Delta u^++q^+_1u^+=-A_1\cdot\nabla u^--A\cdot\nabla u_2^--q^+u_2^+-p_1u^--pu_2^-&\textrm{in}\ Q \\
-i\partial_t u^- -\Delta u^-+q^-_1u^-=A_1\cdot\nabla u^++A\cdot\nabla u_2^+-q^-u_2^--p_1u^+-pu_2^+&\textrm{in}\ Q\\
u^+(\cdot,0)=0,\ u^-(\cdot,0)=0&\textrm{in}\ \Omega\\
u^+=0,\ u^-=0&\textrm{on}\ \Sigma,
\end{array}
\right.
\end{equation}
where $A:=A_1-A_2$, $p:=p_1-p_2$ and $q^\pm:=q^\pm_1-q^\pm_2$.
Further, $u^\pm$ lies in $H^2(0,T;L^2(\Omega)) \cap H^1(0,T;H^2(\Omega)\cap H^1_0(\Omega))$, we 
differentiate \eqref{sy2} with respect to the time-variable and find that
$$
\left\{
\begin{array}{ll}
-i\partial_tv^+-\Delta v^++q^+_1v^+=-A_1\cdot\nabla v^--A\cdot\nabla\partial_tu_2^--q^+\partial_tu_2^+-p_1 v^--p\partial_tu_2^-&\textrm{in}\ Q \\
-i\partial_tv^--\Delta v^-+q^-_1v^-=A_1\cdot\nabla v^++A\cdot\nabla\partial_tu_2^+-q^-\partial_tu_2^--p_1v^+-p\partial_tu_2^+&\textrm{in}\ Q \\
v^+(\cdot,0)=-i(A\cdot\nabla u_0^-+q^+u_0^++pu_0^-)&\textrm{in}\ \Omega\\
v^-(\cdot,0)=-i(-A\cdot\nabla u_0^++q^-u_0^-+pu_0^+)&\textrm{in}\ \Omega\\
v^+=0,\ v^-=0&\textrm{on}\ \Sigma,
\end{array}
\right.
$$
where $v^\pm:=\partial_t u^\pm$.
The next step is to extend $u^\pm_2$ to $\tilde{Q}=\Omega\times(-T,T)$ by setting $u^\pm_2(x,t):=\overline{u^\pm_2(x,-t)}$ for a.e. $(x,t)\in \Omega\times(-T,0)$. Since $u_0^{\pm}$, $A$, $p$ and $q^\pm$ are-real valued, it is not hard to see that the function $v^\pm$, extended to $\Omega\times(-T,0)$ as $v^\pm(x,t):=-\overline{v^\pm(x,-t)}$, satisfies
\begin{equation}
\label{sy3}
\left\{
\begin{array}{ll}
-i\partial_tv^+-\Delta v^++q^+_1v^+=-A_1\cdot\nabla v^--A\cdot\nabla\partial_tu_2^--q^+\partial_tu_2^+-p_1 v^--p\partial_tu_2^-&\textrm{in}\ \tilde{Q} \\
-i\partial_tv^--\Delta v^-+q^-_1v^-=A_1\cdot\nabla v^++A\cdot\nabla\partial_tu_2^+-q^-\partial_tu_2^--p_1v^+-p\partial_tu_2^+&\textrm{in}\ \tilde{Q} \\
v^+(\cdot,0)=-i(A\cdot\nabla u_0^-+q^+u_0^++pu_0^-)&\textrm{in}\ \Omega\\
v^-(\cdot,0)=-i(-A\cdot\nabla u_0^++q^-u_0^-+pu_0^+)&\textrm{in}\ \Omega\\
v^+=0,\ v^-=0&\textrm{on}\ \tilde{\Sigma}:=\Gamma\times(-T,T).
\end{array}
\right.
\end{equation}
Put $\mu^\pm:=\norm{e^{-s\eta_0} \varphi^{1/2}\abs{\partial_\nu\beta}^{1/2} \partial_\nu v^\pm}^2_{L^2(\tilde{\Sigma}_*)}$.
Then, applying Proposition \ref{pro1} to \eqref{sy3}, we get for all $s \ge s_0$ that
\begin{eqnarray}
& & s^{-1/2} \norm{ e^{-s\eta}\nabla^\prime v^\pm}^2_{L^2(\tilde{Q})} + s^{-1/2}\norm{e^{-s\eta}v^\pm}^2_{L^2(\tilde{Q})}+\norm{ e^{-s\eta_0}v^\pm(\cdot,0)}^2_{L^2(\Omega)}  \label{b1}\\
& \leq & C s^{-3/2}\left( s \mu^\pm+\norm{e^{-s\eta} \left( \pm A_1\cdot\nabla v^\mp \pm A\cdot\nabla\partial_tu^\mp_2+q^\pm \partial_tu^\pm_2+p_1 v^\mp+p\partial_tu^\mp_2 \right)}^2_{L^2(\tilde{Q})} \right),\nonumber
\end{eqnarray}
for some positive constant $C$ depending only on $\omega$, $T$ and $\gamma_*$. 
Taking into account that $\norm{A_1}_{L^\infty(\Omega)}\le M$ , $\norm{p_1}_{L^\infty(\Omega)} \le M$, and that the two functions $\partial_tu^\pm_2$ and $\nabla \partial_tu^\pm_2$ are bounded on $\tilde{Q}$ by some positive constant depending only on $\omega$, $T$, $M$, $u_0$ and $g$ according to Lemma \ref{lm1},  \eqref{a1} and \eqref{b1} then yield that
\begin{eqnarray*}
& & s^{-1/2}\norm{e^{-s\eta}\nabla^\prime v^\pm}^2_{L^2(\tilde{Q})}+s^{-1/2}\norm{e^{-s\eta}v^\pm}^2_{L^2(\tilde{Q})}+\norm{e^{-s\eta_0}v^\pm(\cdot,0)}^2_{L^2(\Omega)} \\
& \leq &C s^{-3/2}\left( s\mu^\pm+\norm{e^{-s\eta}\nabla_{x^\prime}v^\mp}^2_{L^2(\tilde{Q})}+\norm{e^{-s\eta}v^\mp}^2_{L^2(\tilde{Q})}+\norm{e^{-s\eta_0}A}^2_{L^2(\Omega)^n}+\norm{e^{-s\eta_0}q^\pm}^2_{L^2(\Omega)}
+\norm{e^{-s\eta_0}p}^2_{L^2(\Omega)}\right), \nonumber
\end{eqnarray*}
provided $s \ge s_0$. Here and in the remaining part of this proof, $C$ denotes a generic positive constant which may change from line to line. Although the constant $C$ depends only on $\omega$, $T$, $\gamma_*$, $M$, $u_0$ and $g$ in the above estimate, in the sequel it might also depend on one or several of the parameters $n$, $y_*$, $\kappa$, $\varrho$, $\af$, $\pf$, $\qf$ and $\theta$ of the problem, as well. Nevertheless, we shall not systematically specify the dependence of $C$ with respect to the above mentioned parameters.

As a consequence we have
\begin{eqnarray*}
& & s^{-\frac{1}{2}} \left( 1 -C s^{-1} \right) \sum_{\ell=\pm} \left( \norm{e^{-s \eta}\nabla^\prime v^\ell}^2_{L^2(\tilde{Q})}+\norm{e^{-s \eta} v^\ell}^2_{L^2(\tilde{Q})} \right) +\sum_{\ell=\pm} \norm{e^{-s\eta_0}v^\ell(\cdot,0)}^2_{L^2(\Omega)} \\
&\leq & C s^{-\frac{3}{2}}\left( \norm{e^{-s\eta_0}A}^2_{L^2(\Omega)^n}+\norm{e^{-s\eta_0}p}^2_{L^2(\Omega)}+\norm{e^{-s\eta_0}q^+}^2_{L^2(\Omega)}+\norm{e^{-s\eta_0}q^-}^2_{L^2(\Omega)}+s \left( \mu^++\mu^- \right)\right), \nonumber
\end{eqnarray*}
provided $s \ge s_0$. Thus, taking $s_1 := \max(s_0,2C)$ in the above estimate, we infer from \eqref{sy3} that
\begin{eqnarray}
\label{b2}
& &\norm{e^{-s\eta_0} \left( q^+u^+_0+A\cdot\nabla u^-_0+pu^-_0 \right)}^2_{L^2(\Omega)}+\norm{e^{-s\eta_0} \left( q^-u^-_0-A\cdot\nabla u^+_0+pu^+_0 \right)}^2_{L^2(\Omega)}\\
& \leq & C s^{-\frac{3}{2}}\left( \norm{e^{-s\eta_0}A}^2_{L^2(\Omega)^n}+\norm{e^{-s\eta_0}p}^2_{L^2(\Omega)}+\norm{e^{-s\eta_0}q^+}^2_{L^2(\Omega)}+\norm{e^{-s\eta_0}q^-}^2_{L^2(\Omega)}+s \left(\mu^++\mu^-\right)\right), \nonumber
\end{eqnarray}
whenever $s \ge s_1$.

The rest of the proof is to adequately choose $n+1$ initial states $u_0^k:=(u^{+,k}_0,u^{-,k}_0)^T$, $k=1,\ldots,n+1$, in order to estimate each of the four unknown functions $A$, $p$ and $q^\pm$ separately, in terms of the corresponding boundary data 
$\mu^\pm_k:=\norm{e^{-s\eta_0} \varphi^{1/2} \abs{\partial_\nu\beta}^{1/2} \partial_\nu v^{\pm,k}}^2_{L^2(\tilde{\Sigma}_*)}$, where $v^{\pm,k}$ is the solution to \eqref{sy3}
with $u_0^\pm=u_0^{\pm,k}$.

\subsection{Building $n+1$ suitable initial data} 
We proceed in two steps.\\
\noindent {\it Step 1: Estimation of $p$, $q^\pm$ and $a_n$.}
We pick $\epsilon \in (0,1)$, put
$u^{+,1}_0(x^\prime,x_n):=0$, $u_0^{-,1}(x^\prime,x_n):= \langle x_n\rangle^{-\frac{1+\epsilon}{2}}$ for all $(x^\prime,x_n)\in\Omega$ and take
$u^\pm_0=u^{\pm,1}_0$ in \eqref{b2}. For all $s \ge s_1$, we get that
\begin{eqnarray*}
& & \norm{e^{-s\eta_0} \left( 2 \langle x_n\rangle^{-\frac{1+\epsilon}{2}} p-(1+\epsilon) \langle x_n \rangle^{-\frac{5+\epsilon}{2}} x_n a_n \right)}^2_{L^2(\Omega)}+ 4 \norm{e^{-s\eta_0} \langle x_n\rangle^{-\frac{1+\epsilon}{2}}q^-}^2_{L^2(\Omega)}\\
& \leq & C s^{-\frac{3}{2}} \left( \norm{e^{-s\eta_0}A}^2_{L^2(\Omega)^n}+\norm{e^{-s\eta_0}p}^2_{L^2(\Omega)}+\norm{e^{-s\eta_0}q^+}^2_{L^2(\Omega)}+\norm{e^{-s\eta_0}q^-}^2_{L^2(\Omega)}  + s \left( \mu^{+,1}+\mu^{-,1} \right)\right),
\end{eqnarray*}
which entails that
\begin{eqnarray}
\label{c0}
& & \norm{e^{-s\eta_0}\langle x_n\rangle^{-\frac{1+\epsilon}{2}}q^-}^2_{L^2(\Omega)} \\
& \leq & C s^{-\frac{3}{2}} \left( \norm{e^{-s\eta_0}A}^2_{L^2(\Omega)^n}+\norm{e^{-s\eta_0}p}^2_{L^2(\Omega)}+\norm{e^{-s\eta_0}q^+}^2_{L^2(\Omega)}+\norm{e^{-s\eta_0}q^-}^2_{L^2(\Omega)} + s \left( \mu^{+,1}+\mu^{-,1} \right)\right) \nonumber
 \end{eqnarray}
and
\begin{eqnarray}
\label{c1}
& & \norm{e^{-s\eta_0} \left( 2 \langle x_n\rangle^{-\frac{1+\epsilon}{2}} p-(1+\epsilon) \langle x_n \rangle^{-\frac{5+\epsilon}{2}} x_n a_n \right)}_{L^2(\Omega)}^2\\
& \leq & C s^{-\frac{3}{2}} \left( \norm{e^{-s\eta_0}A}^2_{L^2(\Omega)^n}+\norm{e^{-s\eta_0}p}^2_{L^2(\Omega)}+\norm{e^{-s\eta_0}q^+}^2_{L^2(\Omega)}+\norm{e^{-s\eta_0}q^-}^2_{L^2(\Omega)} + s \left( \mu^{+,1}+\mu^{-,1} \right) \right). \nonumber
\end{eqnarray}
Doing the same with $u^\pm_0=u^{\pm,2}_0:=u^{\mp,1}_0$, we obtain for all $s \ge s_1$ that
\begin{eqnarray}
\label{c2}
& & \norm{e^{-s\eta_0}\langle x_n\rangle^{-\frac{1+\epsilon}{2}}q^+}^2_{L^2(\Omega)} \\
& \leq & C  s^{-\frac{3}{2}} \left( \norm{e^{-s\eta_0}A}^2_{L^2(\Omega)^n}+\norm{e^{-s\eta_0}p}^2_{L^2(\Omega)}+\norm{e^{-s\eta_0}q^+}^2_{L^2(\Omega)}+\norm{e^{-s\eta_0}q^-}^2_{L^2(\Omega)} + s \left( \mu^{+,2}+\mu^{-,2} \right)\right), \nonumber
\end{eqnarray}
and
\begin{eqnarray}
\label{c3}
& & \norm{e^{-s\eta_0} \left( 2 \langle x_n\rangle^{-\frac{1+\epsilon}{2}} p+(1+\epsilon) \langle x_n\rangle^{-\frac{5+\epsilon}{2}} x_n a _n  \right)}^2_{L^2(\Omega)} \\
& \leq & C s^{-\frac{3}{2}}  \left(  \norm{e^{-s\eta_0}A}^2_{L^2(\Omega)^n}+\norm{e^{-s\eta_0}p}^2_{L^2(\Omega)}+\norm{e^{-s\eta_0}q^+}^2_{L^2(\Omega)}+\norm{e^{-s\eta_0}q^-}^2_{L^2(\Omega)} + s \left( \mu^{+,2}+\mu^{-,2} \right) \right). \nonumber
\end{eqnarray}
Since $8 \norm{e^{-s\eta_0} \langle x_n \rangle^{-\frac{1+\epsilon}{2}} p}^2_{L^2(\Omega)}$ is upper-bounded by the sum of
$\norm{e^{-s\eta_0} \left( 2 \langle x_n\rangle^{-\frac{1+\epsilon}{2}} p+(1+\epsilon) \langle x_n\rangle^{-\frac{5+\epsilon}{2}} x_n a_n \right)}^2_{L^2(\Omega)}$ and
$\norm{e^{-s\eta_0} \left( 2 \langle x_n \rangle^{-\frac{1+\epsilon}{2}} p- (1+\epsilon) \langle x_n \rangle^{-\frac{5+\epsilon}{2}} x_n a_n \right)}^2_{L^2(\Omega)}$, it follows from \eqref{c1} and \eqref{c3} that
\begin{eqnarray}
\label{c4}
& & \norm{e^{-s\eta_0} \langle x_n\rangle^{-\frac{1+\epsilon}{2}}p}^2_{L^2(\Omega)} \\
& \leq & 
C s^{-\frac{3}{2}} \left( \norm{e^{-s\eta_0}A}^2_{L^2(\Omega)^n}+\norm{e^{-s\eta_0}p}^2_{L^2(\Omega)}+\norm{e^{-s\eta_0}q^+}^2_{L^2(\Omega)}+\norm{e^{-s\eta_0}q^-}^2_{L^2(\Omega)} + s \sum_{i=1}^2 \left( \mu^{+,i}+\mu^{-,i} \right)  \right), \nonumber
\end{eqnarray}
whenever $s \ge s_1$.
Similarly, upon estimating $\norm{e^{-s\eta_0} \left( 2 \langle x_n\rangle^{-\frac{1+\epsilon}{2}} p+(1+\epsilon) \langle x_n\rangle^{-\frac{5+\epsilon}{2}} x_n a_n \right)}^2_{L^2(\Omega)}$ from below by the difference $\frac{(1+\epsilon)^2}{2} \norm{e^{-s\eta_0} \langle x_n\rangle^{-\frac{5+\epsilon}{2}}  x_n a_n }^2_{L^2(\Omega)}-4 \norm{e^{-s\eta_0} \langle x_n\rangle^{-\frac{1+\epsilon}{2}} p}^2_{L^2(\Omega)}$, we get  from \eqref{c3}-\eqref{c4} that
\begin{eqnarray}
\label{c5}
& & \norm{e^{-s\eta_0} \langle x_n\rangle^{-\frac{5+\epsilon}{2}} x_n a_n}^2_{L^2(\Omega)} \\
& \leq & C s^{-\frac{3}{2}} \left( \norm{e^{-s\eta_0}A}^2_{L^2(\Omega)^n}+\norm{e^{-s\eta_0}p}^2_{L^2(\Omega)}+\norm{e^{-s\eta_0}q^+}^2_{L^2(\Omega)}+\norm{e^{-s\eta_0}q^-}^2_{L^2(\Omega)} +s \sum_{i=1}^2 \left( \mu^{+,i}+\mu^{-,i} \right) \right), \nonumber
\end{eqnarray}
for all $s \ge s_1$. Bearing in mind that $\abs{x_n a_n} \ge y_* \abs{a_n}$ in $\Omega$, by virtue of the assumption \eqref{h1}, it follows from \eqref{c5} that
\begin{eqnarray}
\label{c6}
& & \norm{e^{-s\eta_0} \langle x_n\rangle^{-\frac{5+\epsilon}{2}} a_n}^2_{L^2(\Omega)} \\
& \leq & C s^{-\frac{3}{2}} \left( \norm{e^{-s\eta_0}A}^2_{L^2(\Omega)^n}+\norm{e^{-s\eta_0}p}^2_{L^2(\Omega)}+\norm{e^{-s\eta_0}q^+}^2_{L^2(\Omega)}+\norm{e^{-s\eta_0}q^-}^2_{L^2(\Omega)} +s \sum_{i=1}^2 \left( \mu^{+,i}+\mu^{-,i} \right) \right), \nonumber
\end{eqnarray}
provided we have $s \ge s_1$.\\

\noindent {\it Step 2: Estimation of the $n-1$ first components $a_j$, $j=1,\ldots,n-1$, of $A$.} 
For all $k=1,\cdots,n-1$ and all $x=(x_1,\ldots,x_n)\in \Omega$, we put $u^{\pm,k+2}_0(x):=x_k\langle x_n\rangle^{-\frac{1+\epsilon}{2}}$, substitute $u^{\pm,k+2}_0$ for $u_0^\pm$ in \eqref{sy1} and then apply Proposition \ref{pro1} to \eqref{sy3}. We get for all $s \ge s_1$ that
\begin{eqnarray*}
& & \norm{e^{-s\eta_0} \left( pu^{-,k+2}_0 +A\cdot\nabla u^{-,k+2}_0 +q^+u^{+,k+2}_0\right)}^2_{L^2(\Omega)}+\norm{e^{-s\eta_0} \left(pu^{+,k+2}_0-A\cdot\nabla u^{+,k+2}_0+q^-u^{-,k+2}_0\right)}^2_{L^2(\Omega)} \\
& \leq & C s^{-3/2} \left(\norm{e^{-s\eta_0}A}^2_{L^2(\Omega)^n}+\norm{e^{-s\eta_0}p}^2_{L^2(\Omega)}+\norm{e^{-s\eta_0}q^+}^2_{L^2(\Omega)}+\norm{e^{-s\eta_0}q^-}^2_{L^2(\Omega)}+s \left(\mu^{+,k+2}+\mu^{-,k+2}\right)\right). \nonumber
\end{eqnarray*}
Since
$\abs{pu^{\mp,k+2}_0\pm A\cdot\nabla u^{\mp,k+2}_0+q^\pm u^{\pm,k+2}_0}^2\geq\frac{\abs{A\cdot\nabla u^{\mp,k+2}_0}^2}{2}-
\abs{pu^{\mp,k+2}_0+q^\pm u^{\pm,k+2}_0}^2$, this entails that
\begin{eqnarray}
\label{d1}
& & \norm{e^{-s\eta_0}A\cdot\nabla u^{+,k+2}_0}^2_{L^2(\Omega)}+\norm{e^{-s\eta_0}A\cdot\nabla u^{-,k+2}_0}^2_{L^2(\Omega)}\\
& \leq & Cs^{-3/2} \left(\norm{e^{-s\eta_0}A}^2_{L^2(\Omega)^n}+\norm{e^{-s\eta_0}p}^2_{L^2(\Omega)}+\norm{e^{-s\eta_0}q^+}^2_{L^2(\Omega)}+\norm{e^{-s\eta_0}q^-}^2_{L^2(\Omega)}+s(\mu^{+,k+2}+\mu^{-,k+2})\right) \nonumber \\
& & +\norm{e^{-s\eta_0} \left(pu^{+,k+2}_0+q^-u^{-,k+2}_0\right)}^2_{L^2(\Omega)}+\norm{e^{-s\eta_0}\left(pu^{-,k+2}_0+q^+u^{+,k+2}_0\right)}^2_{L^2(\Omega)}.
\nonumber
\end{eqnarray}
Moreover, $\norm{e^{-s\eta_0}\left(pu^{\pm,k+2}_0+q^\mp u^{\mp,k+2}_0\right)}^2_{L^2(\Omega)}=\norm{e^{-s\eta_0}x_k\langle x_n\rangle^{-\frac{1+\epsilon}{2}}\left(p+q^\mp\right)}^2_{L^2(\Omega)}$ being upper-bounded by $2\abs{\omega}^2\left(\norm{e^{-s\eta_0}\langle x_n\rangle^{-\frac{1+\epsilon}{2}}p}^2_{L^2(\Omega)} + \norm{e^{-s\eta_0}\langle x_n\rangle^{-\frac{1+\epsilon}{2}}q^\mp}^2_{L^2(\Omega)} \right)$, \eqref{c0}, \eqref{c2}, \eqref{c4} and \eqref{d1} then yield
\begin{eqnarray*}
& & \norm{e^{-s\eta_0}A\cdot\nabla u^{+,k+2}_0}^2_{L^2(\Omega)}+\norm{e^{-s\eta_0}A\cdot\nabla u^{-,k+2}_0}^2_{L^2(\Omega)} \\
& \leq & C s^{-3/2}\left(\norm{e^{-s\eta_0}A}^2_{L^2(\Omega)^n}+\norm{e^{-s\eta_0}p}^2_{L^2(\Omega)}+\norm{e^{-s\eta_0}q^+}^2_{L^2(\Omega)}+\norm{e^{-s\eta_0}q^-}^2_{L^2(\Omega)} \right. \\
& & \left. +s\left(\sum_{i=1}^2 \left( \mu^{+,i}+\mu^{-,i} \right)+ \mu^{+,k+2}+\mu^{-,k+2} \right)\right),\ s \ge s_1, \nonumber
\end{eqnarray*}
From this, \eqref{c5} and the estimates
$\abs{A.\nabla u^{\pm,k+2}_0}^{2} \ge \frac{1}{2} \abs{\langle x_n\rangle^{-\frac{1+\epsilon}{2}} a_{k}}^{2}-\frac{\left(1+\epsilon \right)^2}{4} \abs{\langle x_n\rangle^{-\frac{5+\epsilon}{2}} x_k x_n a_n}^{2}$ and \hfill \break
$\norm{e^{-s\eta_0}\langle x_n\rangle^{-\frac{5+\epsilon}{2}} x_k x_n a_n}_{L^2(\Omega)} \le \abs{\omega} \norm{e^{-s\eta_0}\langle x_n\rangle^{-\frac{5+\epsilon}{2}} x_n a_n}_{L^2(\Omega)}$, it then follows that
\begin{eqnarray*}
& & \norm{e^{-s\eta_0} \langle x_n\rangle^{-\frac{1+\epsilon}{2}} a_{k}}^2_{L^2(\Omega)} \\
& \le & C s^{-3/2}\left(\norm{e^{-s\eta_0}A}^2_{L^2(\Omega)^n}+\norm{e^{-s\eta_0}p}^2_{L^2(\Omega)}+\norm{e^{-s\eta_0}q^+}^2_{L^2(\Omega)}+\norm{e^{-s\eta_0}q^-}^2_{L^2(\Omega)} \right. \nonumber \\
& & \left. +s\left(\sum_{i=1}^2 \left( \mu^{+,i}+\mu^{-,i} \right)+ \mu^{+,k+2}+\mu^{-,k+2} \right)\right),\ s \ge s_1. \nonumber
\end{eqnarray*}
Summing up the above inequality over $k=1,\ldots,n-1$ and remembering \eqref{c6}, we obtain

\begin{eqnarray}
\label{d4}
& & \norm{e^{-s\eta_0} \langle x_n \rangle^{-\frac{5+\epsilon}{2}} A}^2_{L^2(\Omega)^n} \\
& \le & C s^{-3/2}\left(\norm{e^{-s\eta_0}A}^2_{L^2(\Omega)^n}+\norm{e^{-s\eta_0}p}^2_{L^2(\Omega)}+\norm{e^{-s\eta_0}q^+}^2_{L^2(\Omega)}+\norm{e^{-s\eta_0}q^-}^2_{L^2(\Omega)}  + s  \xi \right), \nonumber
\end{eqnarray}
for $s \ge s_1$, where $\xi:=\sum_{i=1}^{n+1} \left( \mu^{+,i}+\mu^{-,i} \right)$.


\subsection{End of the proof}
For all $y>0$ we have
\begin{eqnarray}
\label{e1}
& & \left( \langle y \rangle^{-(5+\epsilon)} - C s^{-\frac{3}{2}} \right) \left( \norm{e^{-s\eta_0}A}^2_{L^2(\Omega)^n} + \norm{e^{-s\eta_0}p}^2_{L^2(\Omega_y)}+\norm{e^{-s\eta_0}q^+}^2_{L^2(\Omega_y)}+ \norm{e^{-s\eta_0}q^-}^2_{L^2(\Omega_y)} \right)\\
& \leq & C s^{-\frac{3}{2}} \left( \norm{e^{-s\eta_0}A}^2_{L^2(\Omega \setminus \Omega)^n}+\norm{e^{-s\eta_0}p}^2_{L^2(\Omega \setminus \Omega_y)}+\norm{e^{-s\eta_0}q^+}^2_{L^2(\Omega \setminus \Omega_y)}+\norm{e^{-s\eta_0}q^-}^2_{L^2(\Omega \setminus \Omega_y)} 
+ s \xi \right), \nonumber \\
& \leq & C s^{-\frac{3}{2}} \left( \norm{A}^2_{L^2(\Omega \setminus \Omega)^n}+\norm{p}^2_{L^2(\Omega \setminus \Omega_y)}+\norm{q^+}^2_{L^2(\Omega \setminus \Omega_y)}+\norm{q^-}^2_{L^2(\Omega \setminus \Omega_y)} 
+ s \xi \right),\ s \ge s_1, \nonumber
\end{eqnarray}
by \eqref{c0}, \eqref{c2}, \eqref{c4} and \eqref{d4}, where $\Omega_y:=\omega\times(-y,y)$. Notice that in the last line of \eqref{e1}, we used that $\eta_0$ is non-negative in $\Omega$.
Moreover, for all $y \ge y_1 :=\left( (2C)^{-\frac{2}{3}}s_1 \right)^{\frac{3}{2(5+\epsilon)}}$ we have $s_y:=(2C)^{\frac{2}{3}} \langle y \rangle^{\frac{2(5+\epsilon)}{3}} \ge s_1$ and
$2 C s_y^{-\frac{3}{2}} \le \langle y \rangle^{-(5+\epsilon)}$. Therefore, applying \eqref{e1} with $s=s_y$ and using that $\eta_0(x) \le \frac{e^{2 K}}{T^2}$ for all $x \in \Omega$, we obtain that
\begin{equation}
\label{e2}
\Theta_{\Omega_y} \leq C \left( \Theta_{\Omega \setminus \Omega_y} + \langle y \rangle^{\frac{2(5+\epsilon)}{3}} \xi \right),\ y \ge y_1,
\end{equation}
where we set $\Theta_X:=\norm{A}^2_{0,X} + \norm{p}^2_{0,X}+\norm{q^+}^2_{0,X}+ \norm{q^-}^2_{0,X}$ for any subset $X \subset \Omega$.
Next, using that $p_j \in \cP_{\pf}(p_0)$ for $j=1,2$, we infer from \eqref{pco} upon writing $\norm{p}_{L^2(\Omega \setminus \Omega_y)} \leq \sum_{j=1,2} \norm{p_j-p_0}_{L^2(\Omega \setminus \Omega_y)}$, that
\begin{eqnarray}
\label{e3}
\norm{p}^2_{L^2(\Omega \setminus \Omega_y)} & \leq& 4 \pf^2 \int_{\Omega \setminus \Omega_y} e^{-2\kappa \langle x_n \rangle^{\varrho}}dx^\prime dx_n\\
& \leq & 4 \pf^2\abs{\omega}  \int_{\abs{x_n}>y} e^{-2\kappa \langle x_n\rangle^{\varrho}}dx_n \nonumber\\
& \leq &4 \pf^2 \abs{\omega} \left(\int_{\R} e^{-\delta \langle x_n\rangle^{\varrho}}dx_n\right)e^{-(2\kappa-\delta)\langle y\rangle^{\varrho}},\ \delta \in (0,2\varrho). \nonumber
\end{eqnarray}
Similarly, since $q_j^\pm \in \cP_{\qf}(q_0^\pm)$ and $A_j \in \cA_{\af}(A_0)$ for $j=1,2$, we obtain
\begin{equation}
\label{e3b}
\Theta_{\Omega \setminus \Omega_y} \leq C e^{-(2\kappa-\delta)\langle y\rangle^{\varrho}},\ \delta \in (0,2\varrho),
\end{equation} 
from \eqref{aco} and \eqref{e3}, where $C=4 \abs{\omega}(\af^2+\pf^2+2 \qf^2) \int_{\R} e^{-\delta \langle x_n\rangle^{\varrho}}dx_n$. It follows from this and \eqref{e2} that
\begin{equation}
\label{e4}
\Theta_{\Omega_y} \leq C \left( e^{-(2\kappa-\delta)\langle y\rangle^{\varrho}} + \langle y \rangle^{\frac{2(5+\epsilon)}{3}} \xi \right),\ y \ge y_1,\ \delta \in (0,2\varrho).
\end{equation}

Put $\xi_1:= e^{-(2\kappa-\delta)\langle y_1 \rangle^{\varrho}}$. We shall examine the two cases $\xi \in (0,\xi_1]$ and $\xi \in (\xi_1,+\infty)$ separately. Let us start with $\xi \in (0,\xi_1]$. In this case, we pick
$y \in [y_1,+\infty)$ so large that $e^{-(2\kappa-\delta)\langle y \rangle^{\varrho}}=\xi$, i.e., $y=\left( \left(-\frac{\ln \xi}{2 \kappa - \delta} \right)^{\frac{2}{\varrho}}-1 \right)^{\frac{1}{2}}$. Thus, with reference to \eqref{e3b}-\eqref{e4} we get for all $\xi \in (0,\xi_1]$ that
$\Theta_{\Omega \setminus \Omega_y} \le C \xi_1^{1-2\theta} \xi^{2\theta}$ and that
$\Theta_{\Omega_y} \leq C \left( \xi_1^{1-2\theta} + C_1(\theta) \right) \xi^{2\theta}$,
where $C_1(\theta):=\sup_{\xi \in (0,\xi_1]} \left( \xi^{1-2\theta} \left(\frac{-\ln \xi}{2 \kappa - \delta} \right)^{\frac{2(5+\epsilon)}{3 \varrho}} \right)<\infty$ from the assumption $\varrho>0$. As a consequence we have 
\begin{equation}
\label{e5}
\Theta_{\Omega} \leq C \left( 2\xi_1^{1-2\theta} + C_1(\theta) \right) \xi^{2\theta},\ \xi \in (0,\xi_1],
\end{equation}
and the desired result follows. Now, when $\xi \in (\xi_1,+\infty)$, we infer from \eqref{pco} upon majorizing
$\norm{p}_{L^2(\Omega)}^2$ by the sum
$2 \sum_{j=1,2} \norm{p_j-p_0}_{L^2(\Omega)}^2$, that
$\norm{p}_{L^2(\Omega)}^2\leq 4 \pf^2 \abs{\omega} \left( \int_{\R} e^{-2 \kappa \langle x_n \rangle^{\varrho}} dx_n \right) \xi_1^{-2\theta} \xi^{2\theta}$. Doing the same with $q^\pm$ and $A$, with the aid of, respectively, \eqref{pco} and \eqref{aco}, we find that $\Theta_\Omega \leq \tilde{C}_1(\theta) \xi^{2\theta}$, where the notation 
$\tilde{C}_1(\theta)$ stands for the constant $4 \left( \af^2+\pf^2 +2\qf^2 \right) \abs{\omega} \left( \int_{\R} e^{-2 \kappa \langle x_n \rangle^{\varrho}} dx_n \right) \xi_1^{-2\theta}$. This, \eqref{e5} and the estimates $\mu^\pm_k \le C \norm{\partial_\nu v^{\pm,k}}^2_{L^2(\tilde{\Sigma}_*)}$ for all $k=1,\ldots,n+1$, yield \eqref{se}, which completes the proof of Theorem \ref{thm-main}.

\section*{Acknowledgments}

\'ES is partially supported by the Agence Nationale de la Recherche (ANR) under grant ANR-17-CE40-0029.


\bigskip 

\noindent {\it Imen Rassas} \\
Universit\'e de Tunis El Manar, \'Ecole Nationale d'Ing\'enieurs de Tunis, LAMSIN, BP 37, Tunis Le Belv\'ed\`ere, Tunisia.\\
E-mail: imen.rassass@gmail.com. 
\bigskip

\noindent {\it Mohamed Hamrouni} \\
Universit\'e de Sousse, \'Ecole Sup\'erieure des Sciences et de la Technologie de Hammam Sousse, Rue Lamine Abassi, Hammam Sousse 4011. \\
E-mail: hamrouni.mohamed4@gmail.com.
\bigskip

\noindent {\it \'Eric Soccorsi} \\
Aix-Marseille Univ, Universit\'e de Toulon, CNRS, CPT, Marseille, France. \\
E-mail: eric.soccorsi@univ-amu.fr.

\end{document}